\newtheorem{theorem}{Theorem}
\newtheorem{cor}{Corollary}
\theoremstyle{definition}
\begin{document}
\title[Tracking a Brownian Bridge]{Size and shape of tracked Brownian bridges}	
	\author{Abdulrahman Alsolami, James Burridge and Micha\l \ Gnacik}
	
	
	\begin{abstract}
		We investigate the typical sizes and shapes of sets of points obtained by irregularly tracking two-dimensional Brownian bridges. The tracking process consists of observing the path location at the arrival times of a non-homogeneous Poisson process on a finite time interval. The time varying intensity of this observation process is the \textit{tracking strategy}. By analysing the gyration tensor of tracked points we prove two theorems which relate the tracking strategy to the average gyration radius, and to the \textit{asphericity} -- a measure of how non-spherical the point set is. The act of \textit{tracking} may be interpreted either as a process of observation, or as process of depositing time decaying ``evidence'' such as scent, environmental disturbance, or disease particles.  We present  examples of different strategies, and explore by simulation the effects of varying the total number of tracking points.
	\end{abstract}
	\maketitle
	%
	%
	%
	%
	%
	
	
\section{Introduction}

Understanding the statistical properties of human and animal movement processes is of interest to ecologists \cite{sta11,cod08,gill88}, epidemiologists \cite{sch20,bal09,dan19}, criminologists \cite{tay14}, physicists and mathematicians \cite{bro06,gon08,gal18,mos07}, including those interested in the evolution of human culture and language \cite{liz11,for12,bur17}.  Advances in information and communication technologies have allowed automated collection of large numbers of human and animal trajectories \cite{zhe15,col06}, allowing real movement patterns to be studied in detail and compared to idealised mathematical models. Beyond academic study,  movement data has important practical applications, for example in controlling the spread of disease through contact tracing \cite{dan19}. Due to the growing availability and applications of tracking information, it is useful to possess a greater analytical understanding of the typical shape and size characteristics of trajectories which are observed, or otherwise emit information. In this paper we will consider how such characteristics change when observations occur in a time varying manner.

We are interested in average geometrical properties of Brownian bridges \cite{oks10}. These continuous random motions, conditioned to begin and end in the same location (the \textit{tether point}), are widely used as models of animal movement \cite{hor07}, for example when estimating the home ranges of snakes \cite{sil18}, or birds \cite{fis13}. The mathematical inspiration for our work comes from polymer physics, where the shapes of long chain molecules can be modelled as random walks \cite{rud86,rud87}. Polymer chains and random walks tend to be \textit{elongated} (they are not ``spherical'').  This means that the eigenvalues of their gyration tensor, which measures the distribution of the walk about its centre of mass, are not equal. In 1985 Rudnick and Gaspari showed \cite{rud86} that an \textit{average} measure of  deviation from equal eigenvalues, called  \textit{asphericity}, could be computed exactly for unrestricted walks, or estimated using series expansions for more complex objects.  In this paper we extend the original definition of asphericity by interpreting our bridge walks as movement trajectories for which location information is stored (or emitted) at a time varying rate. We say that such walks are \textit{tracked} \cite{gal18}.   The \textit{tracking strategy} $\mu$, is a probability density function which describes how the density of tracking data varies with time.  Observation times are modelled as a Poisson point process \cite{grim01} with intensity $c \mu(t)$, where $c$ measures the absolute intensity of observations. In contrast to the original definition of asphericity \cite{rud86}, we consider the distribution of tracked points about the tether point, rather than the centre of mass. In the limit $c \rightarrow \infty$ we are able to analytically compute, in terms of $\mu$, both the asphericity of the tracking data, and its radius of gyration (a simple measure of its overall spatial size), about the tether point. 


The function $\mu$ may be given other interpretations than tracking. For example it could be used to model scent decay rates \cite{bri02}, time variations in the density of communication events from mobile phones \cite{can08}, or memory (as in our previous work \cite{gn18}). Here we motivate our study with a possible application to the spread of disease. Consider a large population of foragers (human or animal), each of whom has a home location, from which they make foraging trips to gather resources. Figure \ref{fig:cont} shows an example of a single trip.
\begin{figure}
\centering
\includegraphics[scale=0.23]{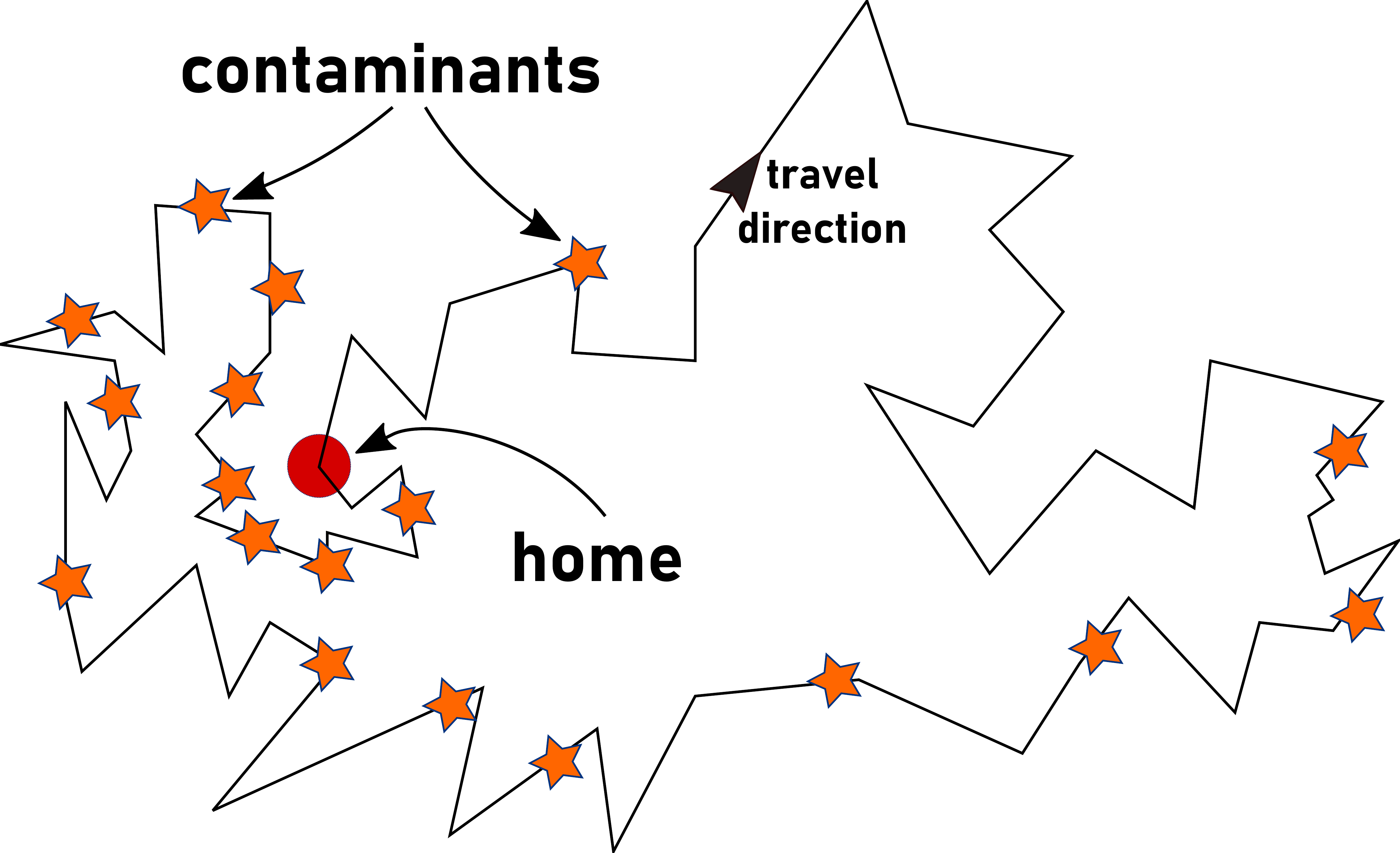}
\caption{Time decaying contaminant particles left in the environment by a foraging organism.}
\label{fig:contam}
\end{figure}
Suppose that some of the population are contaminated, for instance with an infectious disease which they leave traces of in the environment as they move. Diseases which can be spread in this manner include norovirus \cite{tow18},  acute respiratory illnesses such as influenza \cite{boo07} and diseases spread via rodent urine such as Leptospirosis (Weil's disease).  If these environmental traces decay with time then the density of active contaminants left behind by the forager will be greater in regions visited more recently. Different rates of contaminant decay, and times since the walk ended, correspond to different tracking strategies $\mu$. Size and shape characteristics of the contaminated region (its radius of gyration, and asphericity) may be computed, in terms of $\mu$, using the results presented in our paper. Predicting the sizes and shapes of regions contaminated by infected individuals may be useful for quantitatively understanding the spread of diseases via environmental ``fomite'' contamination \cite{boo07}. For example, combined with estimates of the number of infected individuals, such information would allow for estimates of the spatial distribution and density of contaminants.

	\section{Tracked Brownian Bridge}

	\subsection{Brownian bridge}
	
	Let $(B(t))_{t \geq 0}$ be a one-dimensional standard Brownian motion on $[0, \infty)$ (standard so that $B(0) = 0$ and $B(t)$ is normally distributed with zero mean and variance $t$, $t > 0$). A Brownian bridge $\widehat{B}$ that terminates at time $t=1$ \cite{oks10}, is a Brownian motion which is pinned, or tethered, at the origin at $t=0$ and $t=1$, namely,
$\widehat{B}(t) = (B(t) \ | \ B(1) = 0), \ t \in [0, 1]$.
 We note the following, equivalent definition: a Brownian bridge  $\widehat{B} = (\widehat{B}(t))_{t \in [0, 1]}$ is a Gaussian process such that 
$ \mathbb{E}[\widehat{B}(t)] = 0, \ 0 \leq t \leq 1, \ \
	\mbox{Cov}(\widehat{B}(t), \widehat{B}(s)) = \mathbb{E}[\widehat{B}(t)\widehat{B}(s)] = s(1-t), \ 0\leq s< t \leq 1.$
	
	\noindent 
	Given a standard Brownian motion $(B(t))_{t \geq 0}$ one may construct a Brownian bridge explicitly by the following formula 
	\begin{equation}
	\label{bb_eq} \widehat{B}(t) = B(t) -tB(1),\  t \in [0, 1].
	\end{equation}
	Given a pair $(B, \widehat{B})$ we will refer to $\widehat{B}$ as the corresponding Brownian bridge. Conversely, given a Brownian bridge $\widehat{B} = (\widehat{B}(t))_{t \in [0, 1]}$ one may construct a standard Brownian motion on $[0, 1]$  via 
	\begin{equation} \label{bm_eq}B(t) = \widehat{B}(t) + tZ,\end{equation}
	where $Z \sim \mathcal{N}(0, 1)$ is independent from $\widehat{B}$. Given a pair $(\widehat{B}, B)$ we will refer to $B$ as the corresponding Brownian motion. 
	
	The interval $[0, 1]$ may be extended to any finite time-interval $[0, T]$, $T>0$, so that the corresponding Brownian Bridge would be pinned at the origin at $t=0$ and $t=T$. Then instead of (\ref{bb_eq}) we would have $B(t) -\frac{t}{T}B(T)$.
Given a Brownian motion $B_1$ on $[0, 1]$, by scaling the time one may construct a Brownian motion $B_2$ on $[0, T]$ by setting 
$B_2(t) = \sqrt{T} B_1\left(\frac{t}{T}\right)$. For $t \in [0, T]$ we take the Brownian bridge
$\widehat{B_2}(t) = B_2(t) - \frac{t}{T}B_2(T)$
and note that  
\begin{equation}\label{eqn: bridges}\widehat{B_2}(t) = \sqrt{T}\widehat{B_1}\left(\frac{t}{T}\right),\end{equation} 
where $\widehat{B_1}(s) = B_1(s) - sB_1(1)$ for all $s \in [0, 1]$. Hence, further in this paper, without loss of generality we may consider Brownian bridges on $[0, 1]$ and use (\ref{eqn: bridges}) to obtain the relevant results for Brownian bridges on $[0, T]$.

	\subsection{Tracked Brownian bridge}
	
	Let $S$ be a non-homogeneous Poisson point process on $I:= [0, 1]$ with intensity function $\lambda(t)$. Equivalently,  $S$ is the set of arrival times of a corresponding non-homogeneous Poisson counting process on $I$.  In particular, we consider $S$ with intensity of the form 
	$\lambda(t) = c\mu(t),$ where $c>0$ and $\mu$ is a probability density function of a probability distribution supported on $[0, 1]$.	Let $\widetilde{S} := S \cup \{0, 1\}$.	Given a Brownian bridge $\widehat{B}$ we will call the process $(\widehat{B}(t))_{t \in \widetilde{S}}$ a \emph{one-dimensional tracked Brownian Bridge} with tracking strategy kernel $\mu$ and rate of intensity $c$. Figure \ref{fig:exp_bridge} shows an example of a tracked bridge generated by an exponential strategy. We focus on \emph{two dimensional tracked Brownian bridges}, that is, $(\widehat{X}(t), \widehat{Y}(t))_{t \in \widetilde{S}}$, where $\widehat{X}$ and $\widehat{Y}$ are two independent Brownian bridges, with tracking strategy kernel $\mu$ and the rate of intensity $c$, as $c \to \infty$. 
	
	\begin{figure}
		\centering
		\includegraphics[scale=0.4]{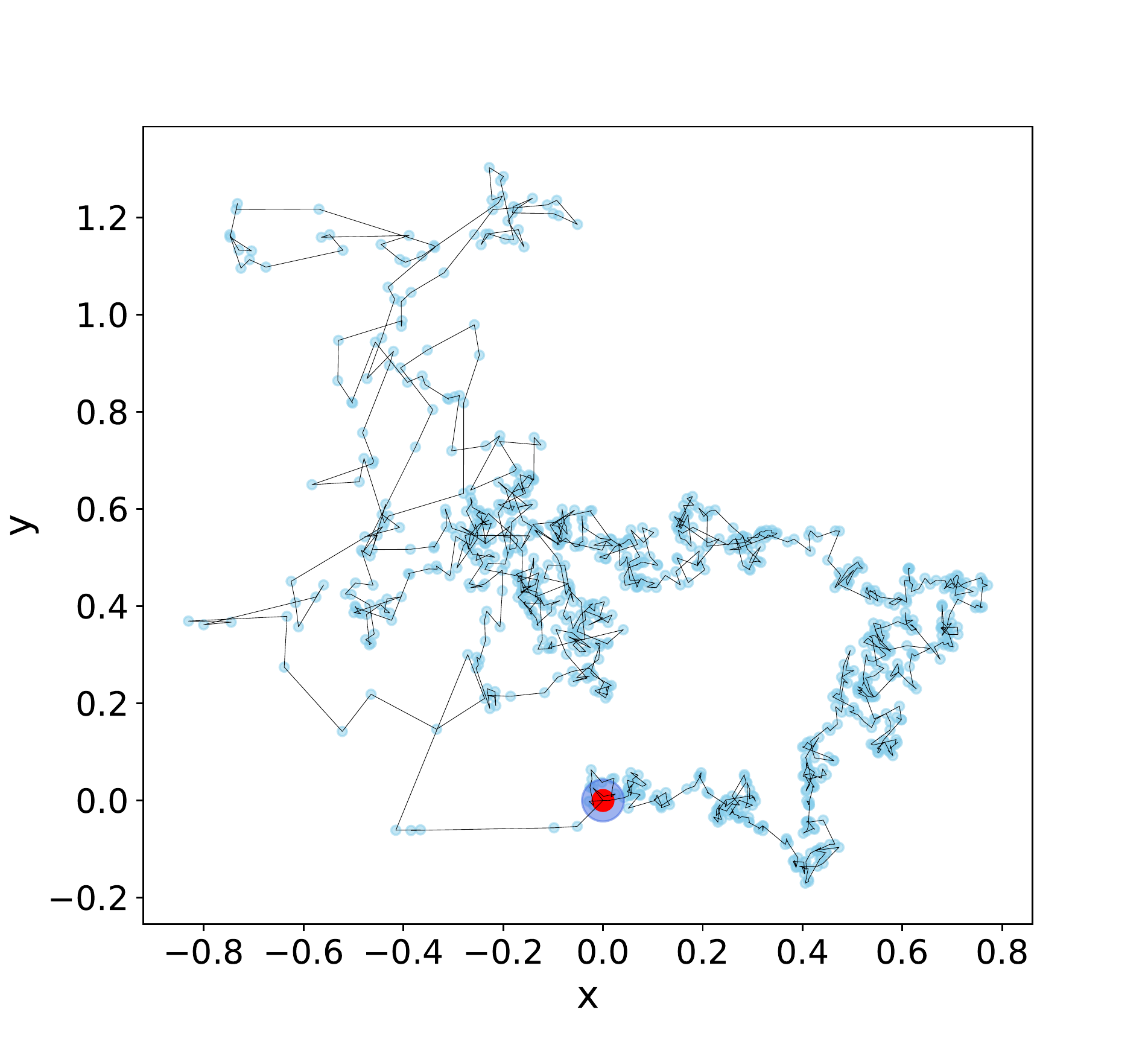}
		\caption{Observed locations of a tracked bridge generated using exponential kernel with $\lambda=5$. Intensity $c=1000$. Red dot at (0,0) shows the start / end point of the walk.}
		\label{fig:exp_bridge}
	\end{figure}

		

	\subsection{Gyration tensor, radius of gyration and asphericity measure}
	To characterise the shapes of tracked Brownian bridges we use \emph{gyration tensors}, which were originally used by Rudnick and Gaspari \cite{rud86} to define a measure of asphericity of random walks.  In our previous paper \cite{gn18} we worked with \emph{egocentric asphericity} which differs from the standard definition of asphericity in that the moments which form the elements of the gyration tensor are taken about the current location of the walker, rather than the centre of mass of the walk. In the current paper moments are taken about the tether point $(0,0)$ of the bridge, thought of as the home location of the walker. The gyration tensor captures the spatial distribution of the tracked bridge about the tether point.	Let $(\widehat{X}(t), \widehat{Y}(t))_{t \in \widetilde{S}}$ be a tracked Brownian Bridge with intensity rate $c$ and tracking strategy kernel $\mu$;
	we can view it as a (random) set $L_c$ of all observed locations of the walker, that is, $L_{c} = \{ (\widehat{X}(t), \widehat{Y}(t)) \colon t \in  \widetilde{S}\}$. The corresponding gyration tensor (about the tether point) is defined via
	\begin{equation}
	T_{c} = \left[\begin{smallmatrix} T_{11} & T_{12}  \\ T_{12} & T_{22 }\end{smallmatrix} \right] = \left[\begin{smallmatrix} \frac{1}{2+|S|}\sum_{t \in S}\widehat{X}(t)^2 & \frac{1}{2+|S|}\sum_{t \in S}\widehat{X}(t)\widehat{Y}(t) \\ \frac{1}{2+|S|}\sum_{t \in S}\widehat{X}(t)\widehat{Y}(t) & \frac{1}{2+|S|}\sum_{t \in S}\widehat{Y}(t)^2\end{smallmatrix} \right].
	\end{equation} 
	The \textit{tether point asphericity} of $L_{c}$ is defined as
	\begin{equation}
	A_c = \frac{\mathbb{E}[(\lambda_1 - \lambda_2)^2]}{\mathbb{E}([\lambda_1 + \lambda_2)^2]} 
	= 1 - 4 \frac{\alpha_c}{\beta_c},
	\end{equation}
	where $\lambda_1, \lambda_2$ are the eigenvalues of $T_c$ and
	\begin{equation}
	\alpha_c = \mathbb{E}[T_{11}T_{22}] - \mathbb{E}[T_{12}^2], \qquad \beta_c = \mathbb{E}[(T_{11}+T_{22})^2].
	\end{equation}
	The trace of $T_c$, that is, $r_c^2 :=\mbox{tr}(T_c) = T_{11}+T_{22}$ is the \emph{radius of gyration}, and determines the size of the walk. Similarly to \cite[formula (4) p. 7017]{sci94} we are taking a square of the usual radius of gyration. A geometrical understanding of tether point asphericity is as follows. Given a realization of the random set $L_c$, the eigenvalues of its gyration tensor give the mean squared deviations of $L_c$ along the principle axes. If one of the eigenvalues is significantly larger than the other, then the spatial distribution of observed locations around the tether point is typically elongated along this direction in space. If, on average, a tracking strategy produces points sets $L_c$ which are very elongated (one eigenvalue much larger than the other), then the tether point asphericity approaches one, which is its maximum value. If the strategy usually generates point sets with approximately equal eigenvalues, then the tether point asphericity of the collection of random sets $L_c$ will be close to zero, meaning that they are \textit{typically} not elongated. Because the elements of our gyration tensor are mean square deviations about the tether point and not the centre of mass of the tracked points, a set of points which would be perfectly spherical by the centre of mass definition are not spherical by ours (see \cite{gn18} for some extreme examples). Despite this, our definition remains a measure of typical elongation (see Figure \ref{fig:cont}) and due to its connection to the original definition we keep the name. We also note that the expected difference between the eigenvalues of the tether point gyration tensor characterise the distribution of tracked points about the home location using a single number. Henceforth we will refer to tether point asphericity simply as asphericity, in the interests of brevity.

		As $c$ becomes infinitely large, the elements of $T_c$  maybe approximated in distribution by the corresponding elements of $T$ defined as follows
		\begin{equation}
		T =  \left[\begin{smallmatrix} \int_0^1 {\widehat{X}(t)}^2 \mu(t) dt &  \int_0^1 {\widehat{X}(t)} {\widehat{Y}(t)}\mu(t) dt  \\ \int_0^1 {\widehat{X}(t)} {\widehat{Y}(t)}\mu(t) dt  & \int_0^1 {\widehat{Y}(t)}^2 \mu(t) dt \end{smallmatrix} \right].
		\end{equation} 
		This observation arises from Campbell's theorem \cite{str10, king02} on the characteristic function of a time-function summed over a point process (see also Appendix A in \cite{gn18}) and	our numerical evidence from the simulation section \ref{sim}. 	The moments of Poisson sums/integrals with stochastic summands/integrands were investigated in \cite{pri12}. Due to the above approximation we have that as $c$ becomes large
		$A_c \approx A = 1 - 4 \frac{\alpha}{\beta}$, where 
		\begin{align} \label{eqn: alpha}
		\alpha =& \left(\mathbb{E}\left[  \int_0^1 {\widehat{X}(t)}^2 \mu(t) dt\right] \right)^2 - \mathbb{E}\left[\left( \int_0^1 {\widehat{X}(t)} {\widehat{Y}(t)}\mu(t) dt \right)^2 \right]\\ \label{eqn: beta}
		\beta =& 2\mathbb{E}\left[ \left( \int_0^1 {\widehat{X}(t)}^2 \mu(t) dt\right)^2\right] + 2\left(\mathbb{E}\left[  \int_0^1 {\widehat{X}(t)}^2 \mu(t) dt  \right]\right)^2.
		\end{align} 
		Moreover, the average value of the radius of gyration $r_c^2$ can be approximated by
	\begin{equation}
		r^2=2\left(\mathbb{E}\left[  \int_0^1 {\widehat{X}(t)}^2 \mu(t) dt  \right]\right).
		\end{equation}
	
Note that matrix $T$ is almost surely (meaning with probability one) positive-definite since it is symmetric and all its pivots are  almost surely positive. Hence, $T$ is diagonalisable and so there exists a rotation matrix $R(\theta)$ with some $\theta \in [0, 2\pi)$ such that
	\begin{equation}\label{eqn: rot}(R(\theta))^T T R(\theta) = \left[\begin{array}{cc}\lambda_1 & 0 \\ 0 & \lambda_2 \end{array} \right],\end{equation}
	where $\lambda_1>0$ and $\lambda_2>0$ are eigenvalues of $T$. In particular, we choose $R(\theta)$ so that $\lambda_1 \geq \lambda_2$. 
	An ellipse with the same asphericity as the tensor $T$ is given by
\begin{equation}
v^TT^{-1}v = \kappa^2,
\label{eqn:ellip}
\end{equation}
	where $v \in \mathbb{R}^2$, $\kappa >0$.
	Different choices of $\kappa$ do not influence the asphericity, this parameter simply rescales the overall size of the ellipse. See Figure \ref{fig:loc_sets} for some examples of tracked walks and the ellipses determined by their gyration tensors.

	\section{Main results}
	Let $(\widehat{X}(t), \widehat{Y}(t))_{t \in \widetilde{S}}$ be a two-dimensional tracked Brownian bridge with tracking strategy kernel $\mu$ and the rate of intensity $c>0$. We introduce the following notation $$M_0(t) = \int_0^t \mu(s)ds, \ M_{1}(t) = \int_0^tM_0(s)ds \mbox{ and } M_2(t)=\int_0^t M_1(s)ds.$$ 
		All calculations for the below proofs are provided in the supplementary material. 
	\begin{theorem} \label{thm1}
		The average radius of gyration of  $(\widehat{X}(t), \widehat{Y}(t))_{t \in \widetilde{S}}$, as $ c \to \infty$ is
		\begin{equation}r^2 = 2M_{1}(1)-4M_{2}(1).\end{equation}
	\end{theorem}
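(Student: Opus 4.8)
The plan is to compute $r^2 = 2\,\mathbb{E}\!\left[\int_0^1 \widehat{X}(t)^2 \mu(t)\,dt\right]$ directly, exchanging expectation and the time integral by Tonelli (the integrand is non-negative), so that $r^2 = 2\int_0^1 \mathbb{E}[\widehat{X}(t)^2]\,\mu(t)\,dt$. Since $\widehat{X}$ is a standard Brownian bridge on $[0,1]$ we have $\mathbb{E}[\widehat{X}(t)^2] = \mathrm{Var}(\widehat{X}(t)) = t(1-t)$ from the covariance formula given in the Brownian bridge subsection (take $s=t$). Hence $r^2 = 2\int_0^1 t(1-t)\,\mu(t)\,dt$, and the entire task reduces to rewriting this single integral in terms of the iterated antiderivatives $M_0, M_1, M_2$.

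The key step is therefore a repeated integration by parts (equivalently, an exchange-of-order-of-integration argument). Write $\int_0^1 t(1-t)\mu(t)\,dt = \int_0^1 t\,\mu(t)\,dt - \int_0^1 t^2 \mu(t)\,dt$. For the first term, integrate by parts with $u = t$, $dv = \mu(t)\,dt$, so $v = M_0(t)$: this gives $\int_0^1 t\,\mu(t)\,dt = M_0(1) - \int_0^1 M_0(t)\,dt = M_0(1) - M_1(1)$. For the second term, integrate by parts twice: first with $u = t^2$, $dv = \mu\,dt$ to get $M_0(1) - 2\int_0^1 t\,M_0(t)\,dt$, then integrate $\int_0^1 t\,M_0(t)\,dt$ by parts with $dv = M_0(t)\,dt$, $v = M_1(t)$ to get $M_1(1) - \int_0^1 M_1(t)\,dt = M_1(1) - M_2(1)$. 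Assembling, $\int_0^1 t^2\mu(t)\,dt = M_0(1) - 2M_1(1) + 2M_2(1)$, and subtracting yields $\int_0^1 t(1-t)\mu(t)\,dt = M_1(1) - 2M_2(1)$, so $r^2 = 2M_1(1) - 4M_2(1)$.

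There is no real obstacle here; the only points requiring care are the justification of the Fubini/Tonelli exchange (immediate since the integrand is non-negative and $\mu$ is a probability density, so $\mathbb{E}\!\int_0^1 \widehat X(t)^2\mu(t)\,dt = \int_0^1 t(1-t)\mu(t)\,dt \le 1/4 < \infty$) and the boundary terms in the integrations by parts, which all vanish or simplify because $M_0(0) = M_1(0) = M_2(0) = 0$ and $M_0(1) = 1$. One should also note that the approximation $A_c \approx A$, $r_c^2 \approx r^2$ as $c \to \infty$ is exactly the content established earlier via Campbell's theorem, so the theorem is a statement about the limiting quantity $r^2$ and no further probabilistic limiting argument is needed beyond what the excerpt already assumes.
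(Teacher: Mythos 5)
Your proof is correct, and it takes a genuinely different, more elementary route than the paper's. The paper writes the bridge through its associated Brownian motion, $\widehat{X}(t)=X(t)-tX(1)$, expands $\int_0^1\widehat{X}(t)^2\mu(t)\,dt$ into stochastic integrals using an integration-by-parts lemma from the authors' earlier work, and then takes expectations term by term, invoking It\^{o}'s isometry to handle the remaining stochastic-integral term. You instead notice that only the marginal second moment is needed: since $\mathbb{E}[\widehat{X}(t)^2]=t(1-t)$, Tonelli reduces $r^2$ to the deterministic integral $2\int_0^1 t(1-t)\mu(t)\,dt$, and two ordinary integrations by parts (the boundary terms behaving as you say, with $M_0(0)=M_1(0)=0$) give $2M_1(1)-4M_2(1)$; your algebra checks out, and the uniform case $\mu\equiv 1$ reproduces the paper's value $r^2=\tfrac{1}{3}$ as a sanity check. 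Your route is shorter and avoids stochastic calculus entirely; what the paper's heavier machinery buys is reuse, since the same stochastic-integral representation of $\int_0^1\widehat{X}(t)^2\mu(t)\,dt$ is the starting point for the second-moment quantities $\mathbb{E}\bigl[\bigl(\int_0^1\widehat{X}(t)^2\mu(t)\,dt\bigr)^2\bigr]$ and $\mathbb{E}\bigl[\bigl(\int_0^1\widehat{X}(t)\widehat{Y}(t)\mu(t)\,dt\bigr)^2\bigr]$ needed for the asphericity in Theorem \ref{thm: main_thm}, where the pointwise variance alone no longer suffices (though a Fubini argument with the bridge covariance $s(1-t)$ and Gaussian moment factorisation would serve there as well). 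You also correctly flag that the passage from $r_c^2$ to $r^2$ as $c\to\infty$ is assumed from the Campbell-theorem discussion preceding the theorem, exactly as in the paper, so no additional probabilistic limit argument is owed.
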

	\begin{proof}
	   Recall that $r^2 = 2\mathbb{E}\left[  \int_0^1 {\widehat{X}(t)}^2 \mu(t) dt\right]$. Let $X(t) = \widehat{X}(t) + t\! \! \!\! \! \!\overbrace{X(1)}^{=:Z \sim \mathcal{N}(0, 1)}$ be the corresponding Brownian motion then by expanding the bracket ${\widehat{X}(t)}^2 = (X(t) - tX(1))^2$ and applying \cite[Lemma B.1]{gn18} we obtain 
	   \begin{align*}
	   	\int_0^1\! {{\widehat{X}}(t)}^2\mu(t) dt =& 2X^2(1)M_{2}(1) -\!M_{1}(1) -2 \!\int_0^1\! M_{0}(t)X(t)dX(t)-\!2X(1)\!\int_0^1 \! \left(M_{1}(t) - t M_{0}(t)\right) \!dX(t).
	   \end{align*}
	  We then take the expectation and apply Ito's
	  isometry to the last term so that the result holds.
	\end{proof}
	The result from Theorem \ref{thm1} can be generalised to Brownian bridges that terminate at an arbitrary time $T>0$ and have volatility $\sigma>0$. We capture that in the following corollary:
	\begin{cor}
	Let $T>0$, $\sigma>0$ and let  $\widehat{X_0}$ and $\widehat{Y_0}$ be Brownian bridges that terminate at $T$ and let $S$ be a Poisson point process on $[0, T]$ with intensity function $\lambda(t) = c\nu(t)$ for some $c>0$ and an integrable function $\nu$ supported on $[0, T]$. The average radius of gyration of $(\sigma \widehat{X_0}(t), \sigma \widehat{Y_0}(t))_{t \in S \cup \{0, 1\} }$, as $ c \to \infty$ is
	$$r^2 = 2 \sigma^2 \left( N_1(T) - \frac{2}{T}N_2(T)\right),$$
	where $N_0(t) = \int_0^t \nu(s)ds$, $N_1(t) = \int_0^t N_0(s)ds$, \ $N_2(t) = \int_0^t N_1(s)ds$.
	\end{cor}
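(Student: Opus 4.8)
The plan is to deduce the corollary from Theorem~\ref{thm1} by a time change and a scaling argument, avoiding any repetition of the stochastic calculus on $[0,T]$. As in the discussion preceding Theorem~\ref{thm1} (Campbell's theorem together with the $c\to\infty$ law of large numbers for the Poisson sums, and using $\widehat{X_0}(0)=\widehat{X_0}(T)=0$), the average radius of gyration of $(\sigma\widehat{X_0}(t),\sigma\widehat{Y_0}(t))_{t\in S\cup\{0,T\}}$ in the limit $c\to\infty$ is
\begin{equation}
r^2 = 2\sigma^2\,\mathbb{E}\!\left[\int_0^T \widehat{X_0}(t)^2\,\nu(t)\,dt\right],
\end{equation}
the volatility contributing the factor $\sigma^2$ and the two tether-point observations being negligible in the limit. (I take $\nu$ normalised to a probability density, so that the kernel introduced below is one as well; a non-normalised $\nu$ merely rescales $r^2$ by $1/\int_0^T\nu$.)

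Next I would apply the scaling identity (\ref{eqn: bridges}): substituting $\widehat{X_0}(t)=\sqrt{T}\,\widehat{X_1}(t/T)$ for a standard Brownian bridge $\widehat{X_1}$ on $[0,1]$, and then changing variables $s=t/T$, turns the display above into
\begin{equation}
r^2 = 2\sigma^2 T\,\mathbb{E}\!\left[\int_0^1 \widehat{X_1}(s)^2\,\mu(s)\,ds\right],\qquad \mu(s):=T\,\nu(sT).
\end{equation}
Theorem~\ref{thm1}, applied with tracking kernel $\mu$, gives $\mathbb{E}[\int_0^1\widehat{X_1}(s)^2\mu(s)\,ds]=M_1(1)-2M_2(1)$, where $M_0,M_1,M_2$ are the iterated integrals of $\mu$. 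Finally, one elementary substitution shows $M_0(s)=N_0(sT)$, and iterating it yields $M_1(s)=T^{-1}N_1(sT)$ and $M_2(s)=T^{-2}N_2(sT)$; in particular $M_1(1)=T^{-1}N_1(T)$ and $M_2(1)=T^{-2}N_2(T)$. Plugging these into $r^2=2\sigma^2 T\big(M_1(1)-2M_2(1)\big)$ makes the powers of $T$ collapse and gives $r^2=2\sigma^2\big(N_1(T)-\tfrac{2}{T}N_2(T)\big)$, as claimed.

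I do not anticipate a genuine obstacle here: the argument is a routine reduction to Theorem~\ref{thm1}, and the only thing requiring care is tracking the factors of $T$ produced by the successive changes of variables — one in the bridge rescaling and one at each level of the iterated integrals $M_0,M_1,M_2$. The one conceptual point worth stating explicitly is that the $c\to\infty$ approximation of the gyration tensor on $[0,T]$ is justified by exactly the same Campbell-theorem argument already used on $[0,1]$, so no new limiting analysis is needed.
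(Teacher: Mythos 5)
Your proposal is correct and follows essentially the same route as the paper: rescale via $\widehat{X_0}(t)=\sqrt{T}\,\widehat{X}(t/T)$, change variables $s=t/T$, and apply Theorem~\ref{thm1} to the induced kernel on $[0,1]$, tracking how the iterated integrals $M_i$ relate to the $N_i$. The only (harmless) difference is bookkeeping: you normalise the kernel as $\mu(s)=T\nu(sT)$ with prefactor $2\sigma^2T$, whereas the paper takes $\mu(s)=\nu(sT)$ with prefactor $2\sigma^2T^2$, and both collapse to $r^2=2\sigma^2\bigl(N_1(T)-\tfrac{2}{T}N_2(T)\bigr)$.
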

	\begin{proof}
	By virtue of (\ref{eqn: bridges}) there are Brownian bridges $\widehat{X}$ and $\widehat{Y}$ that terminate at $1$
	so that 
	\begin{align*}
	 \widehat{X_0}(t) = \sqrt{T}\widehat{X}\left(\frac{t}{T}\right), \  \widehat{Y_0}(t) = \sqrt{T}\widehat{Y}\left(\frac{t}{T}\right).
	\end{align*}
	Now we are ready to find the gyration tensor: 
	\begin{align*}
	    r^2 =& 2\mathbb{E}\left[  \int_0^T {\left(\sigma\widehat{X_1}(t)\right)}^2 \nu(t) dt\right] = 2T^2\sigma^2\mathbb{E}\left[  \int_0^1 {\widehat{X}(s)}^2 \mu(s) ds\right],
	\end{align*}
	where $\mu(s) = \nu(sT)$. Thus the result follows from the preceding theorem. 
	\end{proof}

	In next result  we specify the asphericity of the tracked Brownian bridge as $c$ becomes infinitely large.

	\begin{theorem}\label{thm: main_thm}
		The asphericity of $(\widehat{X}(t), \widehat{Y}(t))_{t \in \widetilde{S}}$, as $ c \to \infty$ is
		$A = 1 - 4\frac{\alpha}{\beta},$ where
		\begin{align}\label{eqn: alpha1}
		\alpha =& 4M_{0}(1)(M_{1}(1)-2M_{2}(1))-M_{1}^2(1)+4M_{1}(1)M_{2}(1)\\ \nonumber 
		&+4 \int_0^1 t\mu(t)(2M_{2}(t)- tM_{1}(t))dt-2 \int_0^1 tM_{0}^2(t)dt -	2\int_0^1 (M_{1}(t)-tM_{0}(t))^2 dt
		\end{align}
		and
		\begin{align}\label{eqn: beta1}
		\beta =& -4(M_{1}^2(1)-4M_{1}(1)M_{2}(1) + 8M_{2}^2(1))
		\\ \nonumber 
		&+8 \int_0^1 (1-t)tM_{0}^2(t)dt + 16 \int_0^1 (1-2t)M_{0}(t)(2M_{2}(t)-t M_{1}(t))dt\\ \nonumber 
		&+8 \int_0^1 M_{1}(t)((1-4t)M_{1}(t)+8M_{2}(t))dt.
		\end{align}
	\end{theorem}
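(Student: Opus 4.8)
The plan is to take the expressions (\ref{eqn: alpha}) and (\ref{eqn: beta}) for $\alpha$ and $\beta$ -- already identified as the $c\to\infty$ limiting quantities through Campbell's theorem -- and reduce the three distinct expectations occurring in them to a single double integral. Write $E_1 = \mathbb{E}\left[\int_0^1 \widehat{X}(t)^2\mu(t)\,dt\right]$, $E_2 = \mathbb{E}\left[\left(\int_0^1 \widehat{X}(t)\widehat{Y}(t)\mu(t)\,dt\right)^2\right]$ and $E_3 = \mathbb{E}\left[\left(\int_0^1 \widehat{X}(t)^2\mu(t)\,dt\right)^2\right]$, so that $\alpha = E_1^2 - E_2$ and $\beta = 2E_3 + 2E_1^2$. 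By Theorem \ref{thm1} (which is precisely the statement $r^2 = 2E_1$) we already know $E_1 = M_1(1) - 2M_2(1)$, so the task is to compute $E_2$ and $E_3$.

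For both, I would expand the square as a double integral over $[0,1]^2$ and interchange expectation and integration (legitimate since the Gaussian process has finite moments of every order). Using that $\widehat{X}$ and $\widehat{Y}$ are independent centred Gaussian processes with covariance $\rho(s,t) := \mathbb{E}[\widehat{X}(s)\widehat{X}(t)] = \min(s,t) - st$, the cross term in $E_2$ factorises, giving $E_2 = K$ where $K := \int_0^1\int_0^1 \rho(s,t)^2\mu(s)\mu(t)\,ds\,dt$. For $E_3$, Isserlis' theorem (Wick's formula) gives $\mathbb{E}[\widehat{X}(t)^2\widehat{X}(s)^2] = \mathbb{E}[\widehat{X}(t)^2]\mathbb{E}[\widehat{X}(s)^2] + 2\rho(s,t)^2$, so $E_3 = E_1^2 + 2K$. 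Hence $\alpha = E_1^2 - K$ and $\beta = 4E_1^2 + 4K$, and the whole theorem reduces to evaluating the single double integral $K$ in terms of $M_0, M_1, M_2$. (Equivalently one may follow the stochastic-calculus route of Theorem \ref{thm1}: substitute $\widehat{X}(t) = X(t) - tX(1)$ and $\widehat{Y}(t) = Y(t) - tY(1)$, expand, and evaluate the resulting second moments of Ito integrals using Ito's isometry together with independence of increments and of $X$ and $Y$; this produces the same $K$, and is presumably the computation recorded in the supplementary material.)

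To evaluate $K$, use $\rho(s,t) = s(1-t)$ on $\{s\le t\}$ and symmetry to write $K = 2\int_0^1 (1-t)^2\mu(t)\left(\int_0^t s^2\mu(s)\,ds\right)dt$; integrating the inner integral by parts twice gives $\int_0^t s^2\mu(s)\,ds = t^2 M_0(t) - 2tM_1(t) + 2M_2(t)$, so $K = 2\int_0^1 (1-t)^2\mu(t)\bigl(t^2 M_0(t) - 2tM_1(t) + 2M_2(t)\bigr)\,dt$. Expanding $(1-t)^2$ and integrating by parts repeatedly -- transferring the factor $\mu = M_0'$ onto the remaining polynomial and antiderivative factors, using $M_0(1) = 1$ (because $\mu$ is a probability density) and $M_i(0) = 0$, and the identities $\int_0^1 M_0 M_1\,dt = \tfrac{1}{2} M_1(1)^2$, $\int_0^1 M_1 M_2\,dt = \tfrac{1}{2} M_2(1)^2$, $\int_0^1 M_0 M_2\,dt = M_1(1)M_2(1) - \int_0^1 M_1(t)^2\,dt$ -- one rewrites $K$ so that only $M_1(1)$, $M_2(1)$ and the three integrals $\int_0^1 t\mu(t)(2M_2(t) - tM_1(t))\,dt$, $\int_0^1 tM_0(t)^2\,dt$ and $\int_0^1 (M_1(t) - tM_0(t))^2\,dt$ survive. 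Substituting into $\alpha = E_1^2 - K$ and $\beta = 4(E_1^2 + K)$ with $E_1^2 = (M_1(1) - 2M_2(1))^2$ and collecting terms yields (\ref{eqn: alpha1}) and (\ref{eqn: beta1}).

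The only real obstacle is bookkeeping: the final reduction of $K$ produces many boundary and integral terms, and fitting them into exactly the groupings displayed in (\ref{eqn: alpha1}) and (\ref{eqn: beta1}) -- which deliberately leave several weighted integrals unevaluated and in a non-canonical arrangement -- is where a sign or a coefficient is easily lost, so this step should be carried out systematically. There is no analytic subtlety: the passage from $E_2$ and $E_3$ to $K$ uses only Fubini's theorem, independence of $\widehat{X}$ and $\widehat{Y}$, and the Gaussian fourth-moment formula, all of which are valid without qualification here.
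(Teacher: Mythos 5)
Your proposal is correct, but it reaches the result by a genuinely different route from the paper. The paper's proof substitutes the representation $\widehat{X}(t)=X(t)-tX(1)$, $\widehat{Y}(t)=Y(t)-tY(1)$ and evaluates the two second moments $\mathbb{E}\bigl[\bigl(\int_0^1 \widehat{X}^2\mu\bigr)^2\bigr]$ and $\mathbb{E}\bigl[\bigl(\int_0^1 \widehat{X}\widehat{Y}\mu\bigr)^2\bigr]$ by Ito-calculus machinery (Lemmas B.2 and B.5 of their earlier work, with the detailed calculations in the supplementary material), arriving directly at expressions in $M_0,M_1,M_2$ which are then combined algebraically. You instead exploit only the Gaussian structure: Fubini, independence of $\widehat{X}$ and $\widehat{Y}$, and Isserlis' theorem reduce everything to the single kernel integral $K=\int_0^1\int_0^1\rho(s,t)^2\mu(s)\mu(t)\,ds\,dt$ with $\rho(s,t)=\min(s,t)-st$, giving the clean structural identities $\alpha=E_1^2-K$ and $\beta=4(E_1^2+K)$ (hence $A=2K/(E_1^2+K)$), which the paper never makes explicit. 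This buys conceptual transparency and avoids stochastic calculus entirely; the paper's route buys reuse of already-proved lemmas and lands immediately on the particular $M_0,M_1,M_2$ groupings displayed in the theorem. Your intermediate quantities are consistent with the paper's (e.g.\ for the uniform kernel, $E_1=\tfrac16$, $K=\tfrac1{90}$, giving $\alpha=\tfrac1{60}$, $\beta=\tfrac7{45}$, in agreement with \eqref{eqn: alpha1}--\eqref{eqn: beta1}). The one caveat is that you stop short of executing the final integration-by-parts reduction of $K$ into the displayed form; the identities you list (e.g.\ $\int_0^1 M_0M_1=\tfrac12 M_1(1)^2$, $\int_0^1 M_0M_2=M_1(1)M_2(1)-\int_0^1 M_1^2$) are correct and sufficient, so this is routine bookkeeping rather than a gap, but it would need to be carried out to claim the theorem in the exact stated form.
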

	\begin{proof}
	   	Recall that $\alpha$ and $\beta$ are given by (\ref{eqn: alpha}), (\ref{eqn: beta}) so that we need to calculate the following expectations
		\begin{align} \label{exp}
		\mathbb{E}\left[  \int_0^1 {\widehat{X}(t)}^2 \mu(t) dt\right], \
		\mathbb{E}\left[ \left( \int_0^1 {\widehat{X}(t)}^2 \mu(t) dt\right)^2\right] \mbox{ and } \mathbb{E}\left[\left( \int_0^1 {\widehat{X}(t)} {\widehat{Y}(t)}\mu(t) dt \right)^2 \right].
		\end{align} 
		In the preceding theorem we have already found the first expectation term in (\ref{exp}). For the last two terms in (\ref{exp}) one can show that by employing the fact that 
		$\widehat{X}(t) = X(t) - t\! \! \!\! \! \!\overbrace{X(1)}^{=:Z_1 \sim \mathcal{N}(0, 1)}$, 	$\widehat{Y}(t) = Y(t) - t \! \! \!\! \! \!\overbrace{Y(1)}^{=:Z_2 \sim \mathcal{N}(0, 1)}$, where $X(t) = \widehat{X}(t) + tX(1)$ and $Y(t) = \widehat{Y}(t) + tY(1)$ are two independent Brownian motions, and using the results from our earlier work \cite[Lemma B.2 and Lemma B.5]{gn18}, we have
		\begin{align*}
			&\mathbb{E}\left[\left(\int_0^1 {\widehat{X}}^2(t)\mu(t)dt\right)^2 \right] =
			-3M_{1}^2(1) + 12 M_{1}(1)M_{2}(1) - 20M_{2}^2(1)
			+4 \int_0^1 (1-t)tM_{0}^2(t)dt \\
			&+ 8 \int_0^1 (1-2t)M_{0}(t)(2M_{2}(t)-t M_{1}(t))dt+4 \int_0^1 M_{1}(t)((1-4t)M_{1}(t)+8M_{2}(t))dt
			\end{align*}	
			and 
			\begin{align*}
	&\mathbb{E}\left[\left(\int_0^1 \widehat{X}(t)\widehat{Y}(t) \mu(t) dt\right)^2 \right]= 2M_{1}^2(1)+4M_{2}^2(1)-8M_{1}(1)M_{2}(1)+4M_{0}(1)(2M_{2}(1)-M_{1}(1))\\&+2 \int_0^1 tM_{0}^2(t)dt
	+	2\int_0^1 (M_{1}(t)-tM_{0}(t))^2 dt -4 \int_0^1 t\mu(t)(2M_{2}(t)- tM_{1}(t))dt.
	\end{align*}
	Hence, after all the expectations in (\ref{exp}) have been calculated then the main result holds after algebraic simplification of formulae (\ref{eqn: alpha}) and (\ref{eqn: beta}).
	\end{proof}

	\section{Examples}\label{sec: exa}
	
	
In this section we present examples of different tracking strategies $\mu$ and their corresponding asphericities $A$ and radii of gyration $r^2$. 
\subsubsection*{Uniform tracking strategy}
Let $0<s<1$, we consider
$\mu(t) = \frac{1}{s} \mathbf{1}_{[0,s]}(t)$ 
then the radius of gyration is given by 
$r_{s}^2 = s -\frac{2}{3}s^2.$
It attains its maximum value $\frac{3}{8}$ at $s = \frac{3}{4}$, and also 
$\lim_{s \to 1^{-}} r_{s}^2 = \frac{1}{3}.$
The asphericity is given by
$ A(s) =  1 - \frac{15-12s}{40s^2-108s+75}.$
In particular, $A(s)$ is decreasing on $[0, 1]$ and $\lim_{s \to 0^+} A(s) = \frac{4}{5}$, 
$\lim_{s \to 1^-} A(s) = \frac{4}{7} \approx 0.5714.$

\subsubsection*{Exponential tracking strategy}

Let $\lambda>0$ and
$\mu(t) = \frac{\lambda e^{-\lambda t}}{1-e^{-\lambda}}$ be supported on $[0, 1]$, then radius of gyration is given by
$r_{\lambda}^2 = \frac{2 \left(e^{\lambda } (\lambda -2)+\lambda +2\right)}{\left(e^{\lambda }-1\right)
	\lambda ^2}$. In particular, $r_{\lambda}^2$ is decreasing on $(0, \infty)$ with $\lim_{\lambda \to 0^+}r_{\lambda}^2 = \frac{1}{3}$ and $\lim_{\lambda \to \infty} r_{\lambda}^2 =0$. The asphericity is given by \begin{equation}
A(\lambda)=\frac{2 ((\lambda^2 + 8) \cosh(\lambda) - 5 \lambda \sinh(\lambda) - 8)}{2 (\lambda^2 - 8) + (3 \lambda^2 + 16) \cosh(\lambda) - 13 \lambda \sinh(\lambda)}.
\label{eqn:AExp}
\end{equation}
In particular, $A(\lambda)$ is increasing on $(0, \infty)$ and $\lim_{\lambda \to 0^+}A(\lambda) = \frac{4}{7} \approx 0.5714$, 
$\lim_{\lambda \to \infty }A(\lambda) = \frac{2}{3}.$
\subsubsection*{Triangular tracking strategy}
Let $a \in (0, 1)$ and
$$\mu(t) =\left\{ \begin{array}{ll} \frac{2}{a}t & \mbox{if } 0 \leq t \leq a \\
\frac{2}{a-1}t + \frac{2}{1-a}& \mbox{if } a <t<1\\
0 & \mbox{otherwise.} \end{array}\right. $$
The radius of gyration is
$
r_a^2 =\frac{1}{3} \left(-a^2+a+1\right)
$, it has its maximum value $\frac{5}{12}$ at $a = 0.5$. The asphericity is given by
$$
A(a) = \frac{15 a^4-30 a^3+9 a^2+6 a+3}{11 a^4-22 a^3+a^2+10 a+5},
$$
with the maximum value  $A(a_0) = \frac{87}{131}\approx 0.664122$ at $a_0=\frac{1}{2}$ and
$\lim_{a \to 0^+} A(a) = \lim_{a \to 1^-} A(a) = \frac{3}{5}.$
\subsubsection*{Inverted triangular tracking strategy} Let $a \in (0, 1)$ and 
\begin{align*}
\mu(t) =& \begin{cases}-\frac{2}{a}t+2 & \mbox{ if } 0\leq t \leq a \\
\frac{2}{1-a} t + \frac{2a}{a-1} & \mbox{ if } a \leq t \leq 1 \\
0 & \mbox{ otherwise }
\end{cases}
\end{align*}
then radius of gyration is 
$r_a^2 =\frac{1}{3} \left(a^2-a+1\right)$
and so
$
\lim_{a \to 0^{+}} r_a^2 =
\lim_{a \to 1^{-}} r_a^2 = \frac{1}{3}.
$
Also, observe that 
$r_a^2$ has its minimum value at $a=\frac{1}{2}$ with value $\frac{1}{4}$. The asphericity is given by $$A(a)=\frac{37 a^4-74 a^3+11 a^2+26 a-15}{a^4-2 a^3-47 a^2+48 a-25}.$$
Its minimum is
at $a_0=\frac{1}{2}$ with value	$A(a_0) = \frac{11}{23} \approx 0.478261$, maximum at  $a_1 = \frac{1}{2} \pm \frac{\sqrt{21}}{10}$ with value $A(a_1) = \frac{362}{601} \approx 0.602329$. It is increasing on intervals $\left(0, \frac{1}{2}-\frac{\sqrt{21}}{10}\right)$, $\left(\frac{1}{2}, \frac{1}{2}+\frac{\sqrt{21}}{10}\right)$ and decreasing on $\left(\frac{1}{2}-\frac{\sqrt{21}}{10}, \frac{1}{2}\right)$, $\left(\frac{1}{2}+\frac{\sqrt{21}}{10}, 1\right)$.

\subsubsection*{U-shaped tracking strategy}
Let $k \in \mathbb{N}$ and
$\mu(t) = (2k+1)(2t-1)^{2k}$ then
the radius of gyration is
$ r_k^2 = \frac{1}{3+2k}.$
As $r_k^2$ is decreasing we have that its maximal value is $r_1^2 = \frac{1}{5}$ and also
$\lim_{k \to \infty} r_k^2 = 0.$
The asphericity is given by \begin{equation}A(k)=\frac{4 \left(2 k^2+4 k+3\right)}{20 k^2+40 k+21}. \label{eqn:AU}\end{equation}
It is decreasing with maximum
$A(1) = \frac{4}{9}$
and $\lim_{k \to \infty} A(k) =  \frac{2}{5}.$

		\begin{figure}
		\centering
		\includegraphics[scale=0.25]{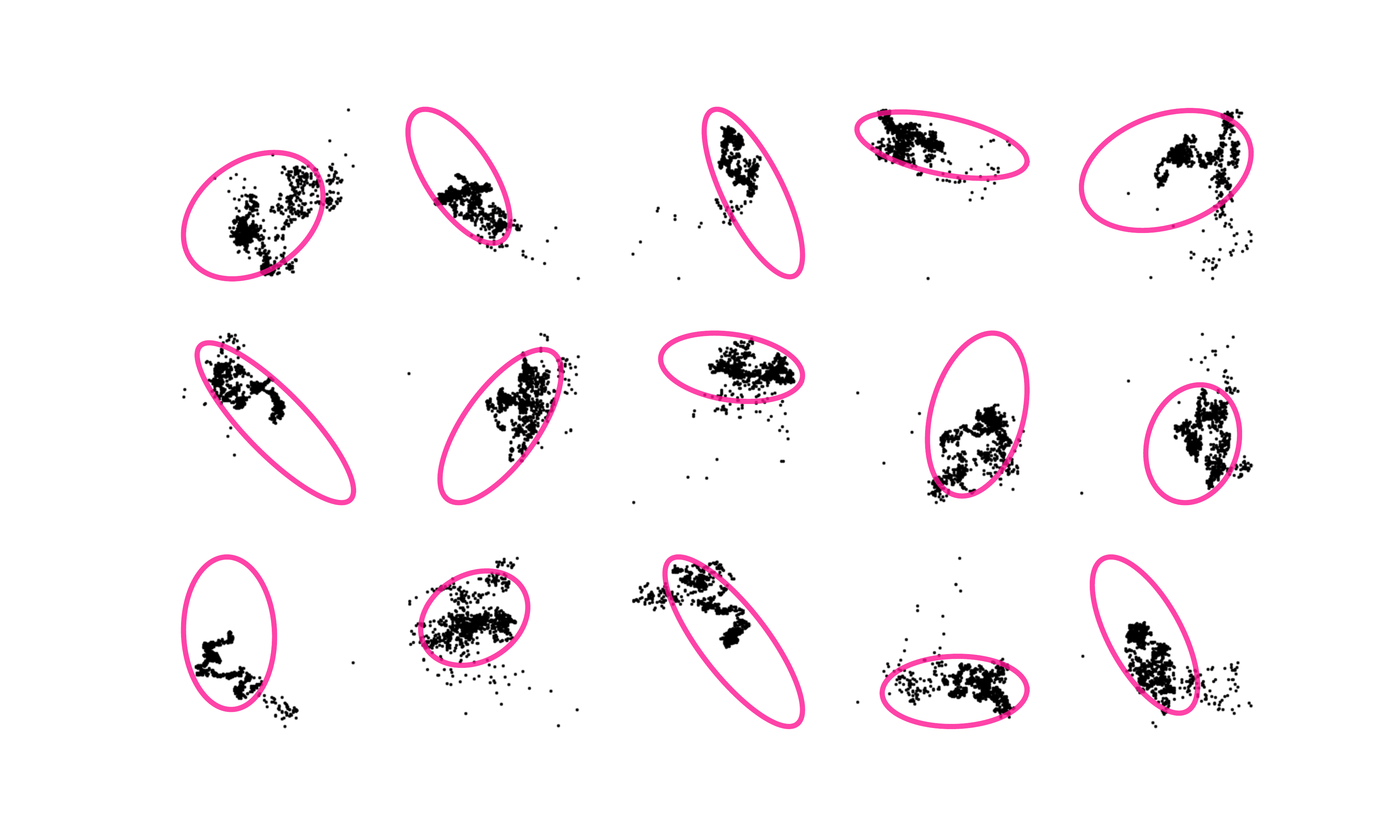}
		\caption{Location sets $L_c$ for exponentially tracked bridges with $\lambda=20$ and $c=1000$. Pink ellipses show solutions to equation (\ref{eqn:ellip}) with $\kappa=1$.}
		\label{fig:loc_sets}
	\end{figure}

\begin{figure}
  \begin{subfigure}[b]{0.4\textwidth}
    \includegraphics[width=\textwidth]{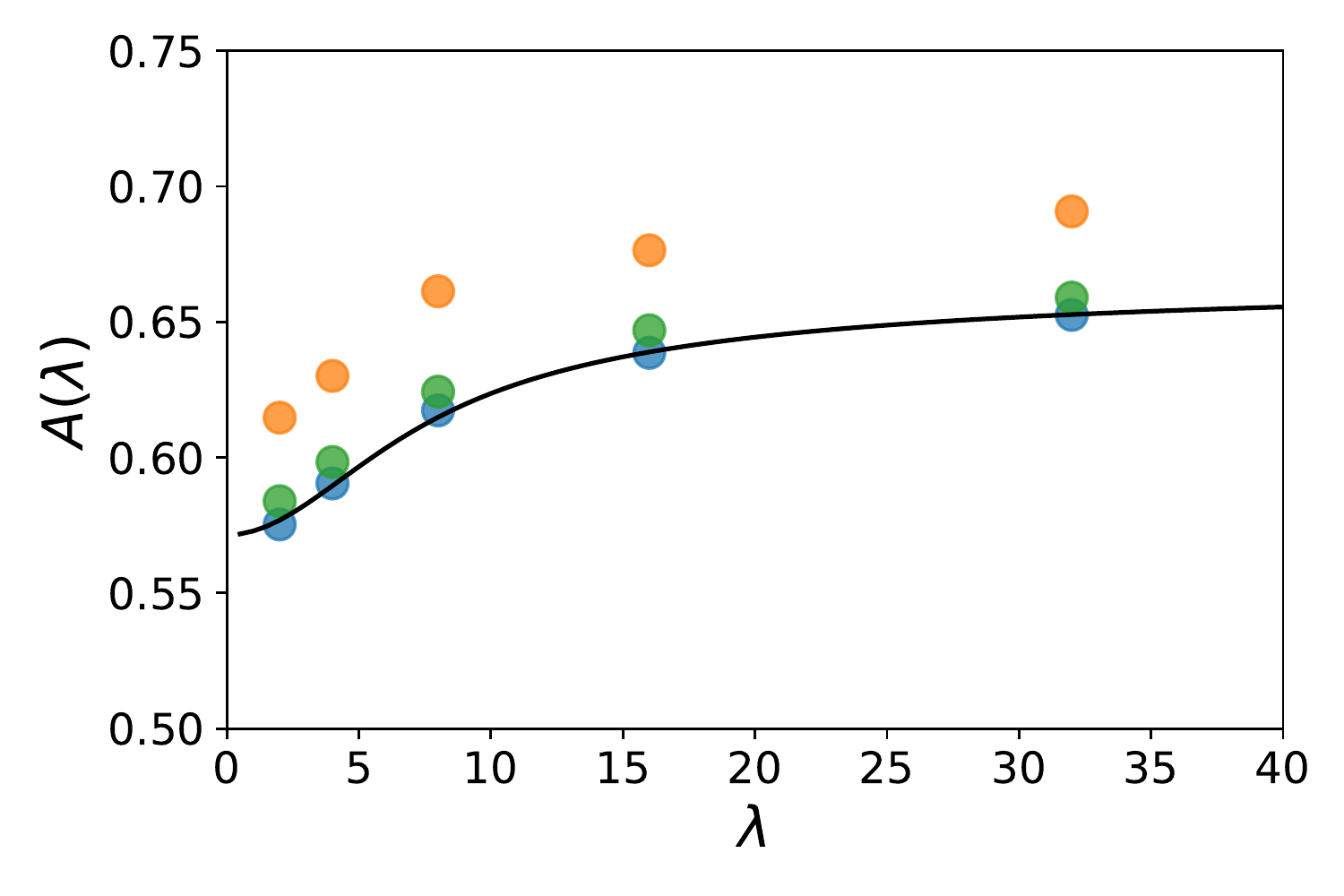}

  \end{subfigure}
  \begin{subfigure}[b]{0.4\textwidth}
    \includegraphics[width=\textwidth]{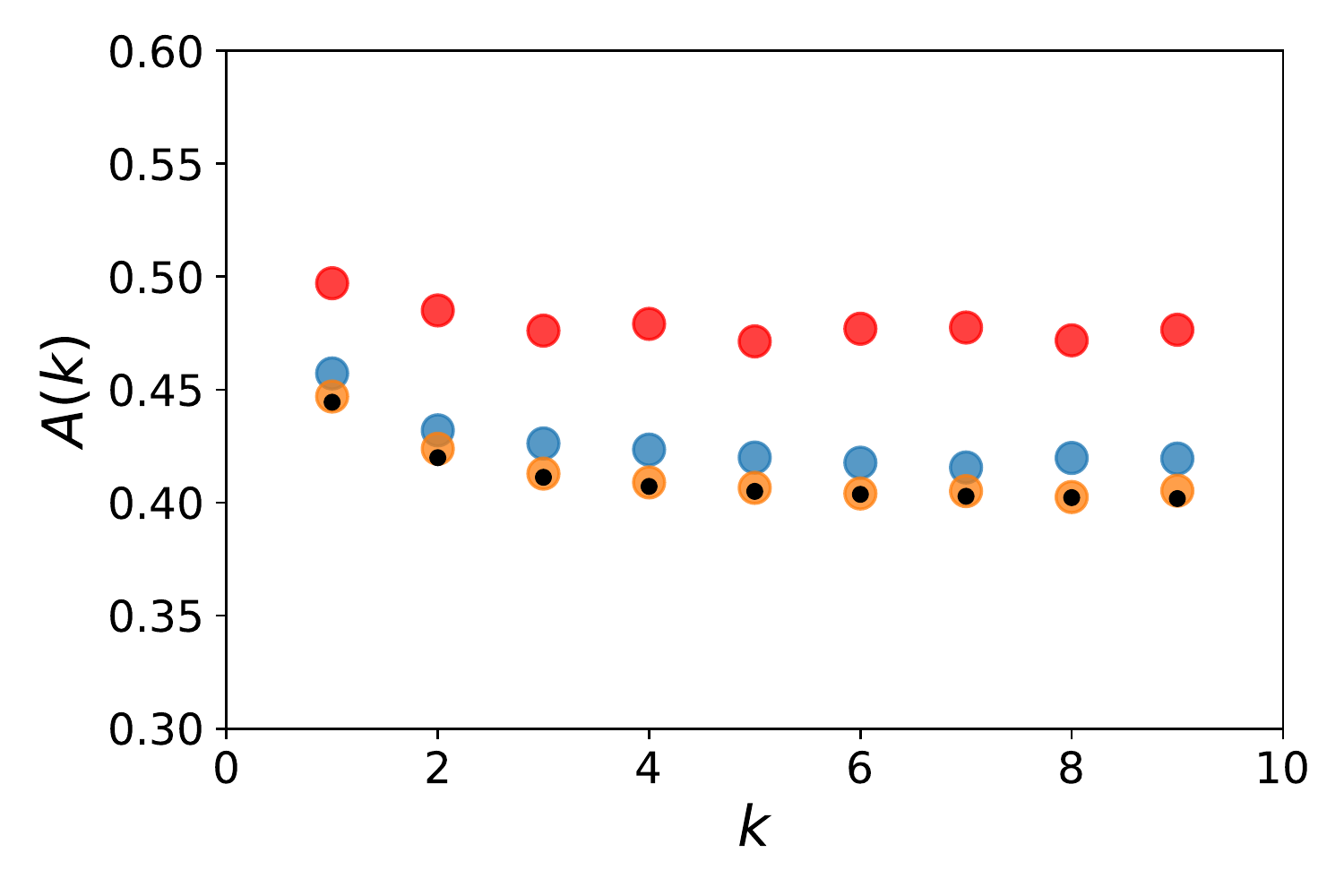}

  \end{subfigure}
   \caption{LHS figure: Asphericities of the tracked bridge with exponential kernel. Blue, green and orange markers correspond to intensities $c=1000,100,20$ respectively. Black line gives analytical form for $A(\lambda)$ given in equation (\ref{eqn:AExp}), which holds in the limit $c \rightarrow \infty$. Estimates computed using $N=10^4$ tracked bridges. $\\$ RHS figure: Asphericities of the tracked bridge bridge with U-shaped kernel. Orange, blue and red markers correspond to intensities $c=1000,100,20$ respectively. Black dots show analytical values of $A(k)$ given by equation (\ref{eqn:AU}), which hold in the limit $c \rightarrow \infty$. Estimates computed using $N=10^4$ tracked bridge bridges.  }
   	\label{fig:U_asp}
   	\label{fig:exp_asp}
\end{figure}

	\section{Simulations}\label{sim}

	We now test our analytical predictions against simulation. The construction of a tracked bridge consists of two steps. We first generate the set $ \widetilde{S} := S \cup \{0,1\}$, where $S$ are the arrival times of an inhomogeneous Poisson point process on $[0,1]$ with intensity $t \mapsto c \mu(t)$. We then generate the locations of a bridge at the set of times $\widetilde{S}$. To generate $\widetilde{S}$, we note that at time $s$, the cumulative distribution of the next arrival time, $T$, is
	\begin{align}
P_s(t):=	\mathbb{P}(T \leq t \ | \ t>s) 
	= \begin{cases}
	1 - \exp\left( -c \int_s^t \mu(u) du \right) &\text{ if } t \geq s \\
	0 & \text{ otherwise. }
	\end{cases}
	\end{align}
	Notice that $P_s(1) \leq 1$ because it is possible that there are no arrivals after time $s$. We now define the inverse of $P_s(t)$, modified so that its domain is $[0,1]$ 
	\begin{equation}
	P_s^{-1}(u) = \begin{cases}
	M_0^{-1} \left(M_0(s) - \frac{\ln(1-u)}{c} \right) & \text{ if } u\leq P_s(1) \\
	1 & \text{ if } u > P_s(1)
	\end{cases}
	\label{eqn:invc}
	\end{equation}
	where $M_0$ is the cumulative of the density $\mu$, earlier defined. The case $u<P_s(1)$ in (\ref{eqn:invc}) ensures that $P^{-1}: [0,1] \rightarrow [0,1]$. We now generate the set $\widetilde{S}$ beginning with time $T_0=0$ and generating the sequence $T_1,T_2, \ldots$ using
	$T_{k+1} = P^{-1}_{T_k}(U_k)$
	where $U_{k} \sim U[0,1]$ are independent uniform random variables. The sequence terminates when $T_k=1$. Having generated $\widetilde{S}$ we compute the locations of two independent Brownian motions at the times in $\widetilde{S}$ using $B_i(T_{k+1}) = B_i(T_k) + Z_{k+1}$ where $i \in \{1,2\}$ and $Z_{k+1} \sim \mathcal{N}(0,T_{k+1}-T_k)
$. The tracked bridge locations are given by
$\widehat{B}_i(T_k) = B_i(T_k) -T_k B_i(1)$. Figure \ref{fig:exp_bridge} 
	shows a tracked bridge generated by this method using exponential kernel. To estimate asphericities we generate a large number, $N$, of tracked bridges and compute the corresponding set of gyration tensors $T(1),T(2), \ldots , T(N)$. We then compute the estimators
	\begin{align}
	\widehat{\alpha}_c = \frac{1}{N} \sum_{n=1}^N \left(T_{11}(n) T_{22}(n) - T_{12}^2(n) \right), \ \ 
	\widehat{\beta}_c = \frac{1}{N} \sum_{n=1}^N \left(T_{11}(n) + T_{22}(n)\right)^2. 
	\end{align}
	Our asphericity estimator is then $\widehat{A} = 1 - 4\widehat{\alpha}_c/\widehat{\beta}_c$.

	To illustrate the correspondence between analytical and simulated asphericities and gyration radii we consider in detail the exponential and U-shaped kernels. In Figure \ref{fig:exp_asp} we have estimated the asphericity of exponentially tracked bridges for a series of values of the kernel parameter, $\lambda$, using three intensity levels, $c \in \{20,100,1000\}$. For finite intensity our analytical results underestimate the true asphericity, with the discrepancy rapidly reducing as $c$ approaches 100. A heuristic argument for this effect is that the subset of points along the continuous bridge which define the tracked bridge generate a structure whose convex hull lies inside that of the full bridge. The distance between the inner and outer hulls is a decreasing function of intensity. If we take an elongated hull, and move all of its boundaries inward by a fixed distance, then we produce a shape with greater asphericity. Hence, bridges tracked at lower intensity have larger asphericity. We also note from Figure \ref{fig:exp_asp} (LHS) that as $\lambda \rightarrow \infty$ the asphericity tends to $\tfrac{2}{3}$ which is identical to the asphericity of a standard, untethered random walk tracked using an exponential strategy \cite{gn18}  (note that in \cite{gn18}, $\mu$ was viewed as the memory kernel of a forager). This is a result of the fact that in the earliest part of its motion a bridge behaves like an untethered random walk, and for large $\lambda$ only the earliest parts of the walk are tracked. Figure \ref{fig:U_asp} (RHS) shows asphericity estimates using tracked bridges generated by the U-shaped kernel. Here we see that the effect of intensity is qualitatively similar to the exponential kernel case, with low intensity tracked bridges being on average more aspherical. For a U-shaped kernel the tracked points occur mainly at the beginning and end of the walk, so we are in effect computing the asphericity of two random walk paths, both starting from the same location. The superposition of two walks produces a structure which is less elongated than a single walk. These two walks are not independent, but as $k \rightarrow \infty$ the two (increasingly short) parts of the walk which are tracked behave progressively more like independent walks, further reducing the asphericity.  Predicted asphericities for all other tracking strategies have been verified by simulation for large $c$.   
	
	In Figure \ref{fig:exp_rad} we compare analytical values of the  gyration radius of an exponentially tracked walk to simulated values for finite observation intensities. When the total number of observations is small we underestimate the radius by a small fraction ($\approx 10 \%$ for the lowest intensity). The simulated radius converges to our analytical result with increasing $c$. For a given intensity, the magnitude of the discrepancy is approximately proportional to the gyration radius, so the relative error appears to be independent of $\lambda$. Similar radius underestimation effects are observed for the U-shaped kernel. 
	
	\begin{figure}
		\centering
		\includegraphics[scale=0.5]{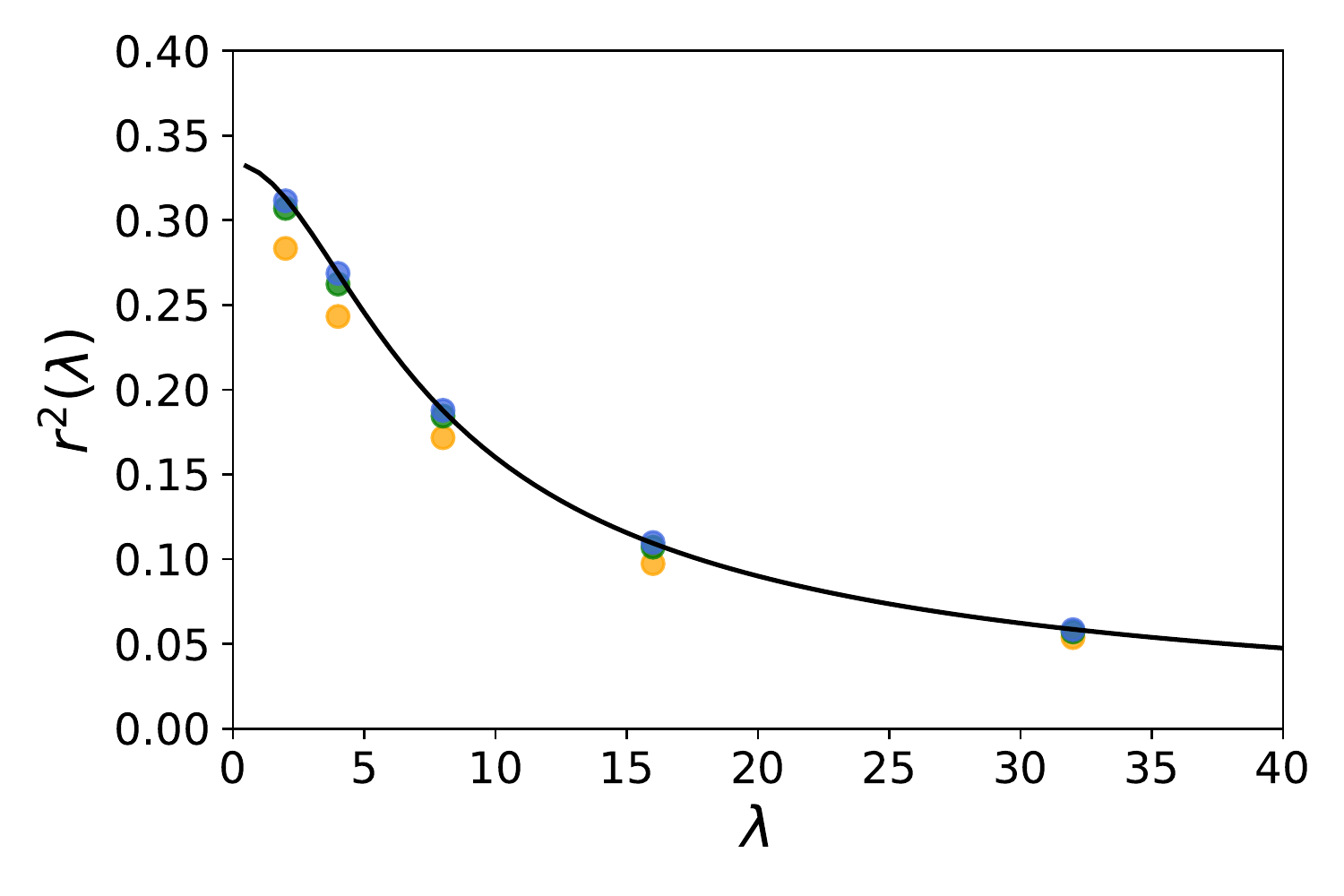}
		\caption{Radii of gyration of the tracked bridge with exponential kernel. Blue, green and orange markers correspond to intensities $c=1000,100,20$ respectively. Black line gives analytical form for $r^2(\lambda)$ given section \ref{sec: exa} Examples: Exponential tracking strategy, which holds in the limit $c \rightarrow \infty$. Estimates computed using $N=10^4$ tracked bridges. }
		\label{fig:exp_rad}
	\end{figure}

	\begin{figure}
		\centering
		\includegraphics[width=\linewidth]{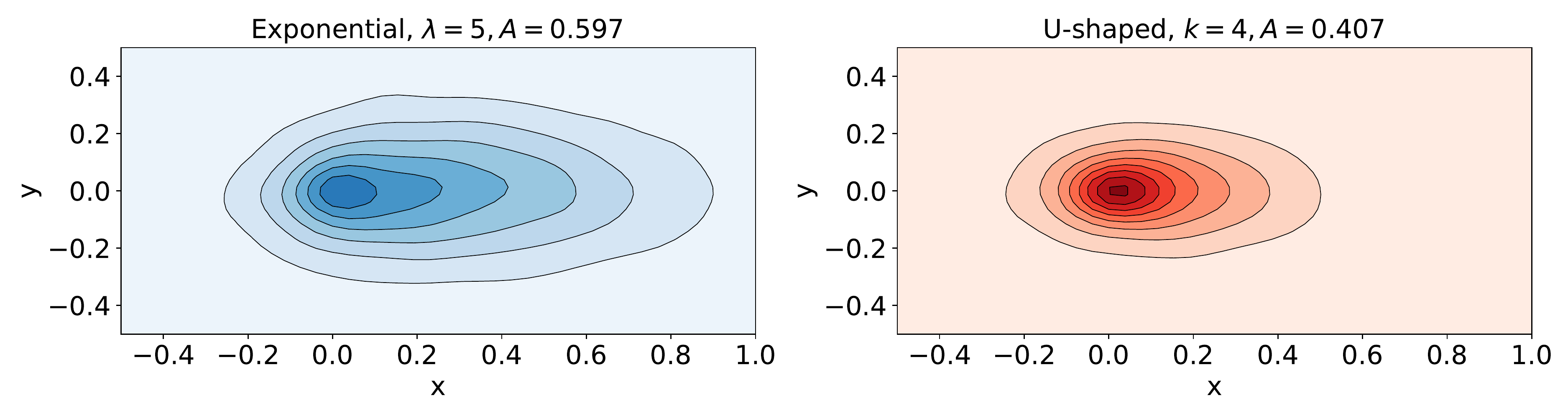}
		\caption{Probability density estimates of tracked bridges with $c=1000$, rotated so that their principle axes are horizontal and their centres of mass are to the left of the origin. Gaussian kernel density estimation bandwidth $=0.05$. In exponential (blue) plot contours levels are 0.05 apart, and in U-shaped (red) plot contours are  0.1 apart.   }
		\label{fig:cont}
	\end{figure}

	In Figure \ref{fig:cont} we provide a simple visualization of the average spatial structure of tracked bridges.  We have used Gaussian kernel density estimation \cite{scikit-learn} to compute the probability density of points after rotating the bridge so that its principle axis is horizontal, and then if necessary applying a second rotation by $\pi$  so that the centre of mass of the tracked points has positive horizontal coordinate. The combined transformation illustrates the extent to which the tracked bridge is elongated, and also the extent to which its centre of mass is displaced from the origin (its start and end point). The density estimates in Figure \ref{fig:cont} were obtained by first generating 400 tracked bridges, and then separately transforming each of them, before applying kernel density estimation to the combined point set. We see from Figure \ref{fig:cont} that the kernel which gives higher asphericity creates bridges with a more elongated density, after our transformation has been applied.

	\section{Discussion}

	In this work we have considered how time variations in the density of information collected about the location of a Brownian bridge affects the typical size and shape of the set of tracked locations. Using methods developed in the context of polymer physics \cite{rud86,rud87}, we have derived general expressions which link the tether point asphericity and gyration radius of this set to the tracking strategy, and have provided analytical formulae for these properties for a range of explicitly defined strategies. Advances in information and communication technology mean tracking data is becoming increasingly available, and the number of applications is likely to grow. For this reason, analytical characterisation of the geometrical properties of tracked paths or, equivalently, of the time decaying trail left by a walker, may be of use in applications. For example, our results might be used to estimate the sizes and shapes of spatial regions contaminated by a diseased walker, as described in the introduction.  When automatically tracking people or animals using surveillance images \cite{del14}, a key problem is the maintenance of identity when the view of the target is interrupted or difficult to resolve due to increased numbers of other individuals. When such effects vary in time, for example due to predictable changes in the number of active individuals, then the average radius of gyration of tracked points may be used to estimate the typical range of the underlying walks. One might also compare empirical radii and asphericities to our results, as a means to verify that the Brownian bridge is a reasonable path model.

\bibliographystyle{unsrt}
\bibliography{MRWRefs}

\includepdf[pages=-]{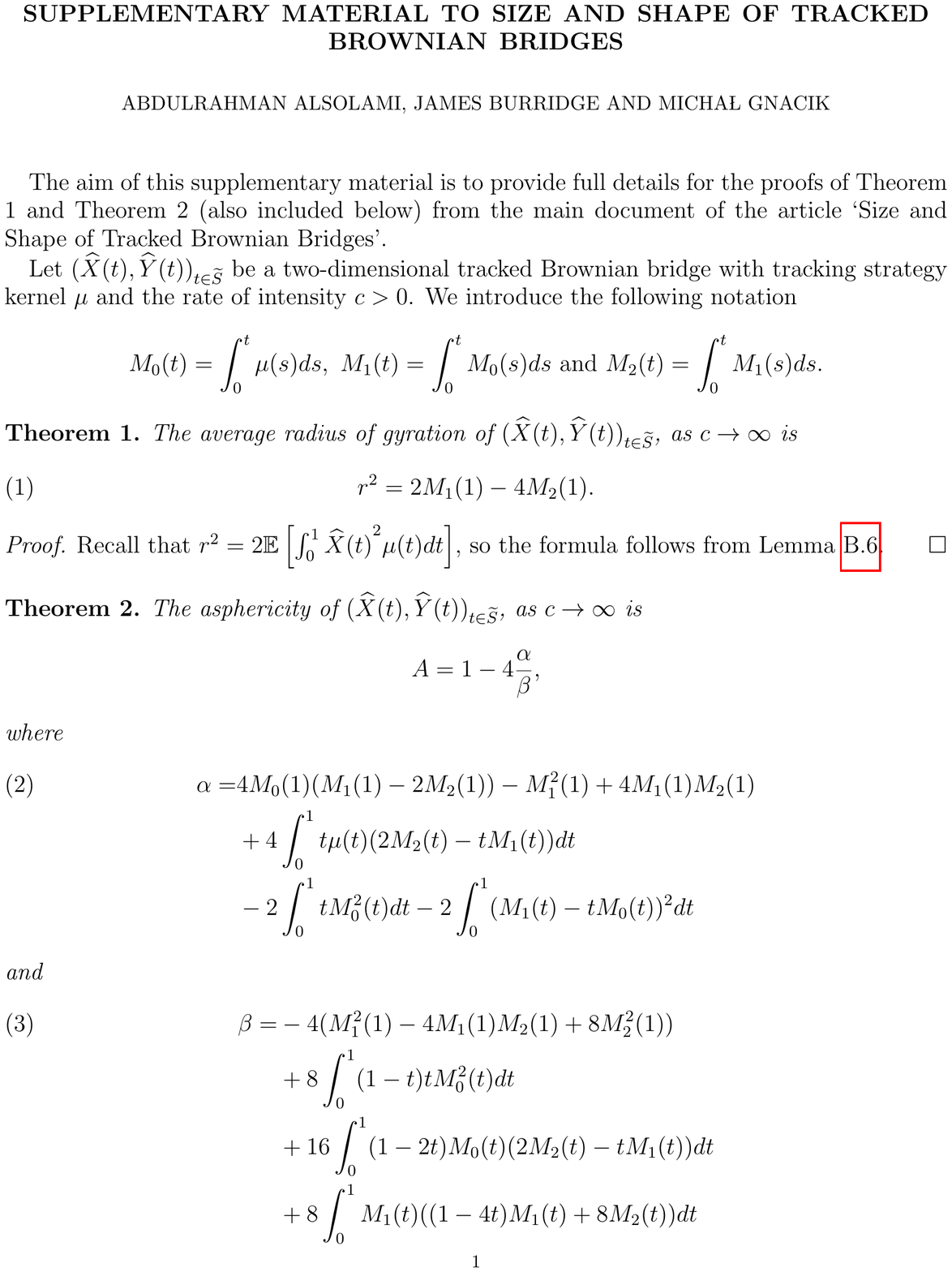}
\end{document}